\title{\vspace*{18pt}  Control of MTDC Transmission Systems under~Local~Information}
\author{ \IEEEauthorblockA{Martin Andreasson$^{\IEEEauthorrefmark{2}}$, Dimos V. Dimarogonas, Henrik Sandberg and  Karl H. Johansson  
}
\\
\thanks{This work was supported in part by the European Commission by the Hycon2 project, the Swedish Research Council (VR) and the Knut and Alice Wallenberg Foundation. 
We would like to thank the anonymous reviewers for their encouraging and insightful feedback. Their comments have helped to improved the presentation of the paper significantly. 
The authors are with the ACCESS Linnaeus Centre, KTH Royal Institute of Technology, Stockholm, Sweden.The $2^{\text{nd}}$ author is also affiliated with the Centre for Autonomous Systems at KTH.
\IEEEauthorrefmark{2} Corresponding author. E-mail: mandreas@kth.se}
}
\newtheorem{theorem}{Theorem}
\newtheorem{remark}{Remark}
\newtheorem{objective}{Objective}
\DeclareMathOperator*{\argmin}{argmin}
\DeclareMathOperator*{\diag}{diag}
\newcommand{\beq}{\begin{equation}}
\newcommand{\eeq}{\end{equation}}
\newcommand{\bq}{\begin{eqnarray}}
\newcommand{\eq}{\end{eqnarray}}
\newcommand{\bqn}{\begin{eqnarray*}}
\newcommand{\eqn}{\end{eqnarray*}}
\newcommand{\bee}{\begin{enumerate}}
\newcommand{\eee}{\end{enumerate}}
\newlength\fheight
\newlength\fwidth
\begin{document}
\maketitle

\begin{abstract}
High-voltage direct current (HVDC) is a commonly used technology for long-distance electric power transmission, mainly due to its low resistive losses. 
In this paper a distributed controller for multi-terminal high-voltage direct current (MTDC) transmission systems is considered. Sufficient conditions for when the proposed controller renders the closed-loop system asymptotically stable are provided. Provided that the closed loop system is asymptotically stable, it is shown that in steady-state a weighted average of the deviations from the nominal voltages is zero. Furthermore, a quadratic cost of the current injections is minimized asymptotically.
\end{abstract}

\section{Introduction}

Transmitting power over long distances while minimizing losses is one of the greatest challenges in today's power transmission systems. Increased distances between power generation and consumption is a driving factor behind long-distance power transmission. One such example are large-scale off-shore wind farms, which often require power to be transmitted in cables over long distances to the mainland power grid. High-voltage direct current (HVDC) power transmission is a commonly used technology for long-distance power transmission. Its higher investment costs compared to AC transmission lines are compensated by its lower resistive losses for sufficiently long distances. The break-even point, i.e., the point where the total costs of overhead HVDC and AC lines are equal, is typically 500-800 km \cite{padiyar1990hvdc}. However, for cables, the break-even point is typically lower than 100 km \cite{bresesti2007hvdc}. Increased use of HVDC for electrical power transmission suggests that future HVDC transmission systems are likely to consist of multiple terminals connected by several HVDC transmission lines. Such systems are referred to as Multi-terminal HVDC (MTDC) systems in the literature \cite{van2010multi}. 

Maintaining an adequate DC voltage is one of the most important  control problems for HVDC transmission systems. Firstly, the voltage levels at the DC buses govern the current flows by Ohm's law and Kirchhoff's circuit laws. Secondly, if the DC voltage deviates too far from a nominal operational voltage, equipment could be damaged, resulting in loss of power transmission capability \cite{van2010multi}.
 
Different voltage control methods for HVDC systems have been proposed in the literature. Among them, the voltage margin method (VMM) and the voltage droop method (VDM) are the most well-known methods \cite{haileselassie2009control}.
These voltage control methods change the active injected power to maintain active power balance in the DC grid and as a consequence, control the DC voltage. A decreasing DC voltage requires increased injected currents through the DC buses in order to restore the voltage.

The VDM controller is designed so that several DC buses participate to control the DC voltage through proportional control \cite{14}. All participating terminals change their injected active power to a level proportional to the deviation from the nominal voltage \cite{haileselassie2009control, zonetti2014modeling}. These decentralized proportional controllers induce static errors in the voltage, which is the main disadvantage of VDM. 
 
The VMM controller on the other hand, is designed so that one terminal is responsible to control the DC voltage, by e.g., a PI controller. The other terminals keep their injected active power constant. The terminal controlling the DC voltage is called a slack terminal. When the slack terminal is no longer able to supply or extract the power necessary to maintain its DC bus voltage within a certain threshold, a new terminal will operate as the slack terminal \cite{8}. The transition between the slack terminals can cause conflicts between the controllers, and requires one or a few terminals to inject all the current needed to maintain an adequate voltage \cite{8}.


Distributed control has been successfully applied to both primary and secondary frequency control of AC transmission systems \cite{andreasson2013distributed, simpson2012synchronization, li2014connecting, Andreasson2014TAC}. Recently, distributed controllers have been applied also to  secondary frequency control of asynchronous AC transmission systems connected through an MTDC system \cite{dai2010impact}. In \cite{Andreasson2014_IFAC}, a distributed controller for voltage control of MTDC systems was proposed. It was shown that the controller can regulate the voltages of the terminals, while the injected power is shared fairly among the DC buses. However, this controller possesses the disadvantage of requiring a terminal dedicated to measuring and controlling the voltage. 
 In this paper, we propose a fully distributed voltage controller for MTDC transmission systems, which possesses the property of fair power sharing, asymptotically minimizing the cost of the power injections.

 The remainder of this paper is organized as follows. In Section \ref{sec:prel}, the mathematical notation is defined. In Section \ref{sec:model}, the system model and the control objectives are defined. In Section \ref{sec:dist_control}, a distributed averaging controller is presented, and its stability and steady-state properties are analyzed. In Section \ref{sec:simulations}, simulations of the distributed controller on a four-terminal MTDC test system are provided, showing the effectiveness of the proposed controller. The paper ends with a discussion and concluding remarks in Section \ref{sec:discussion}.

\section{Notation}
\label{sec:prel}
Let $\mathcal{G}$ be a graph. Denote by $\mathcal{V}=\{ 1,\hdots, n \}$ the vertex set of $\mathcal{G}$, and by $\mathcal{E}=\{ 1,\hdots, m \}$ the edge set of $\mathcal{G}$. Let $\mathcal{N}_i$ be the set of neighboring vertices to $i \in \mathcal{V}$.
In this paper we will only consider static, undirected and connected graphs. For the application of control of MTDC power transmission systems, this is a reasonable assumption as long as there are no power line failures.
Denote by $\mathcal{B}$ the vertex-edge adjacency matrix of a graph, and let $\mathcal{\mathcal{L}_W}=\mathcal{B}W\mathcal{B}^T$ be its weighted Laplacian matrix, with edge-weights given by the  elements of the positive definite diagonal matrix $W$.
Let $\mathbb{C}^-$ denote the open left half complex plane, and $\bar{\mathbb{C}}^-$ its closure. We denote by $c_{n\times m}$ a matrix of dimension $n\times m$ whose elements are all equal to $c$, and by $c_n$ a column vector whose elements are all equal to $c$. For a symmetric matrix $A$, $A>0 \;(A\ge 0)$ is used to denote that $A$ is positive (semi) definite. $I_{n}$ denotes the identity matrix of dimension $n$. For simplicity, we will often drop the notion of time dependence of variables, i.e., $x(t)$ will be denoted $x$ for simplicity.

\section{Model and problem setup}
\label{sec:model}
Consider a MTDC transmission system consisting of $n$ DC buses, denoted by the vertex set $ \mathcal{V}= \{1, \dots, n\}$, see Figure~\ref{fig:graph} for an example of an MTDC topology. The DC buses are modelled as ideal current sources which are connected by $m$ HVDC transmission lines, denoted by the edge set $ \mathcal{E}= \{1, \dots, m\}$. The dynamics of any system (e.g., an AC transmission system) connected through the DC buses, or any dynamics of the DC buses (e.g., AC-DC converters) are neglected. The HVDC lines are assumed to be purely resistive, implying that 
\begin{align*}
I_{ij} = \frac{1}{R_{ij}} (V_i -V_j),
\end{align*}
due to Ohm's law, where $V_i$ is the voltage of bus $i$, $R_{ij}$ is the resistance and $I_{ij}$ is current of the HVDC line from bus $i$ to $j$. 
The voltage dynamics of an arbitrary DC bus $i$ are thus given by
\begin{align}
\begin{aligned}
C_i \dot{V}_i &= -\sum_{j\in \mathcal{N}_i} I_{ij} + I_i^{\text{inj}} + u_i \\
&= -\sum_{j\in \mathcal{N}_i} \frac{1}{R_{ij}}(V_i -V_j) + I_i^{\text{inj}} + u_i,
\end{aligned}
\label{eq:hvdc_scalar}
\end{align}
where $C_i$ is the total capacity of bus $i$, including shunt capacities and the capacitance of the HVDC line, $I_i^{\text{inj}}$ is the nominal injected current, which is assumed to be unknown but constant over time, and $u_i$ is the controlled injected current. Equation \eqref{eq:hvdc_scalar} may be written in vector-form as
\begin{align}
\begin{aligned}
\dot{V} &= -C\mathcal{L}_R V +CI^{\text{inj}} + Cu,
\end{aligned}
\label{eq:hvdc_vector}
\end{align}
where $V=[V_1, \dots, V_n]^T$, $C=\diag([C_1^{-1}, \dots, C_n^{-1}])$, $I^{\text{inj}} = [I^{\text{inj}}_1, \dots, I^{\text{inj}}_n]^T$, $u=[u_1, \dots, u_n]^T$ and $\mathcal{L}_R$ is the weighted Laplacian matrix of the graph representing the transmission lines, whose edge-weights are given by the conductances $\frac{1}{R_{ij}}$. The control objectives considered in this paper are twofold.

\begin{figure}
\begin{center}
\begin{tikzpicture}
\GraphInit[vstyle=Normal]
\Vertex[x=0, y=0] {1}
\Vertex[x=2, y=0] {2}
\Vertex[x=0, y=-1.5] {3}
\Vertex[x=2, y=-1.5] {4}
\Edge[label=$e_1$](1)(2)
\Edge[label=$e_4$](3)(4)
\Edge[label=$e_2$](1)(3)
\Edge[label=$e_3$](2)(4)
\end{tikzpicture}
\end{center}
\caption{Example of a graph topology of an MTDC system.}
\label{fig:graph}
\end{figure}
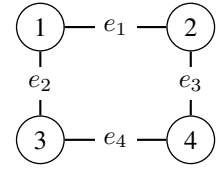

\begin{objective}
\label{obj:1}
The voltages of any DC bus, $V_i$, should converge to a value close to the nominal voltage for bus $i$ ($V^{\text{nom}}_i$), after a disturbance has occurred. More precisely, a weighted average of the steady-state errors should be zero:
\begin{align*}
\lim_{t\rightarrow \infty} \sum_{i=1}^n K_i^V \left(V(t) - V_i^{\text{nom}}\right)= 0 ,
\end{align*}
for some $K^V_i>0, i=1, \dots, n$. Furthermore, the asymptotic voltage differences between the DC buses should be bounded, i.e., $\lim_{t\rightarrow \infty} |V_i(t)-V_i(t)|\le V^* \; \forall i,j \in \mathcal{V}$, for some $V^*>0$.
\end{objective}

\begin{remark}
It is in general not possible to have $\lim_{t\rightarrow \infty}V_i(t) = V^{\text{nom}}_i$ for all $i \in \mathcal{V}$, since this by Ohm's law would imply that the HVDC line currents are always unchanged, not allowing for time-varying demand. 
\end{remark}

\begin{objective}
\label{obj:2}
The cost of the current injections should be minimized asymptotically. More precisely, we require
\begin{align*}
\lim_{t\rightarrow \infty} u(t) = u^*,
\end{align*}
where $u^*$ is defined by
\begin{align}
[u^*,V^*]=\argmin_{[u,V]} \sum_{i \in \mathcal{V}} \frac{1}{2} f_i u_i^2 \quad \text{s.t.} \quad \mathcal{L}_R V  &= I^{\text{inj}} + u, \label{eq:opt1}
\end{align}
and where $f_i>0, i=1, \dots , n$ are positive constants. 
\end{objective}

\begin{remark}
Objective \ref{obj:2} is analogous to the quadratic optimization of AC power generation costs considered in \cite{andreasson2013distributed, simpson2012synchronization}. 
\end{remark}

\section{Distributed MTDC control}
\label{sec:dist_control}
It was shown in \cite{Andreasson2014_IFAC} that a decentralized proportional droop controller cannot satisfy Objective \ref{obj:1} and \ref{obj:2} simultaneously. Furthermore, a proportional controller can only satisfy Objective \ref{obj:1} or \ref{obj:2} if the proportional gains tend  to infinity or $0$, respectively. A distributed controller was proposed, which was shown to satisfy Objective \ref{obj:1} and \ref{obj:2} simultaneously. However, this controller requires one specific DC bus to measure and control the voltage. This controller thus has the disadvantage of being sensitive to failure of this specific terminal. 
In this section we propose a novel, fully distributed controller for MTDC networks which allows for communication between the buses. This controller does not rely on a single leader, but the voltage regulation is distributed among all buses. The proposed controller takes inspiration from the control algorithms given in \cite{andreasson2013distributed, Andreasson2014_IFAC, simpson2012synchronization}, and is given by
\begin{align}
\label{eq:distributed_voltage_control}
\begin{aligned}
u_i &= -K^P_i(V_i - \hat{V}_i - \bar{V}_i ) \\
\dot{\hat{V}}_i &= -\gamma \sum_{j\in \mathcal{N}_i} c_{ij} \left( (\hat{V}_i + \bar{V}_i -V_i){-}(\hat{V}_j + \bar{V}_j -V_j) \right) \\
\dot{\bar{V}}_i &= - K_i^V (V_i - V_i^{\text{nom}}) -\delta \sum_{j\in \mathcal{N}_i} c_{ij} (\bar{V}_i - \bar{V}_j).
\end{aligned}
\end{align}
The first line of the controller \eqref{eq:distributed_voltage_control} can be interpreted as a proportional controller, whose reference value is controlled by the remaining two lines. The second line ensures that the weighted current injections converge to the identical optimal value through a consensus-filter. The third line is a distributed secondary voltage controller, where each terminal measures the voltage and updates the reference value through a consensus-filter. 
 In vector-form, \eqref{eq:distributed_voltage_control} can be written as
\begin{align}
\label{eq:distributed_voltage_control_vector}
\begin{aligned}
u &= -K^P (V - \hat{V} - \bar{V} ) \\
\dot{\hat{V}} &= -\gamma \mathcal{L}_c (\hat{V} + \bar{V} -V) \\
\dot{\bar{V}} &= -K^V(V - V^{\text{nom}}) - \delta \mathcal{L}_c \bar{V},
\end{aligned}
\end{align}
where $K^P=\diag([K^P_1, \dots , K^P_n])$, $K^V=\diag([K^V_1, \dots, K^V_n])$, $V^{\text{nom}}=[V^{\text{nom}}_1, \dots , V^{\text{nom}}_n]^T$ and $\mathcal{L}_C$ is the weighted Laplacian matrix of the graph representing the communication topology, denoted $\mathcal{G}_c$, whose edge-weights are given by $c_{ij}$, and which is assumed to be connected.
Substituting the controller \eqref{eq:distributed_voltage_control_vector} in the system dynamics \eqref{eq:hvdc_vector}, yields
\begin{align}
\begin{aligned}
\begin{bmatrix}
\dot{\bar{V}} \\ \dot{\hat{V}} \\ \dot{V}
\end{bmatrix}
&=
\underbrace{
\begin{bmatrix}
-\delta \mathcal{L}_C & 0_{n\times n} & -K^V \\
-\gamma \mathcal{L}_C & -\gamma \mathcal{L}_C & \gamma \mathcal{L}_C \\
CK^P & CK^P & -C(\mathcal{L}_R + K^P)
\end{bmatrix}
\begin{bmatrix}
{\bar{V}} \\ {\hat{V}} \\ {V}
\end{bmatrix}}_{\triangleq A}
\\
&\;\;\;\;+ \underbrace{\begin{bmatrix}
K^VV^{\text{nom}} \\ 0_{n} \\ CI^{\text{inj}}
\end{bmatrix}}_{\triangleq b}.
\end{aligned}
\label{eq:cl_dynamics_vector}
\end{align}
 The following theorem characterizes when the controller \eqref{eq:distributed_voltage_control} stabilizes the system \eqref{eq:hvdc_scalar}, and shows that it has some desirable properties. 

\begin{theorem}
\label{th:distributed_voltage_control}
Consider an MTDC network described by \eqref{eq:hvdc_scalar}, where the control input $u_i$ is given by \eqref{eq:distributed_voltage_control} and the injected currents $I^{\text{inj}}$ are constant. Let $K^P=F^{-1}$, where $F=\diag([f_1, \dots, f_n])$. It is easily shown that  $A$ as defined in \eqref{eq:cl_dynamics_vector}, has one eigenvalue equal to $0$. If all other eigenvalues lie in the open complex left half plane, then:
\begin{enumerate}
\item $\lim_{t\rightarrow \infty} \sum_{i=1}^n K_i^V \left(V(t) - V_i^{\text{nom}}\right)= 0$ 
\item $\lim_{t\rightarrow  \infty } u(t) = u^*$, where $u^*$ is defined as in Objective \ref{obj:2}.
\end{enumerate}
 The relative voltage differences are also bounded and satisfy $\lim_{t\rightarrow \infty} |V_i(t)-V_i(t)|\le 2I^{\text{max}}\sum_{i=2}^n \frac{1}{\lambda_i}\; \forall i,j \in \mathcal{V}$, where $I^{\text{max}}=\max_i |I^{\text{tot}}|$ and $I^{\text{tot}}=  I^{\text{inj}} + \lim_{t\rightarrow \infty} u(t)$, and $\lambda_i$ denotes the $i$'th eigenvalue of $\mathcal{L}_R$. 
\end{theorem}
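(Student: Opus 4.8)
The plan is to reduce both claims to a steady-state analysis, the real work being to first establish that the trajectory of the closed-loop system \eqref{eq:cl_dynamics_vector} actually converges to an equilibrium. I would begin by identifying $\ker A$. Solving $Av=0$ block-wise and repeatedly invoking that $\mathcal{L}_C$ and $\mathcal{L}_R$ are connected Laplacians (so $\mathcal{L}\zeta=0 \Leftrightarrow \zeta\in\operatorname{span}(\mathbf{1}_n)$, while $K^V\mathbf{1}_n$ and $K^P\mathbf{1}_n$ are not orthogonal to $\mathbf{1}_n$ since the gains are positive) pins the right null vector down to $v_0=[\mathbf{1}_n^T,-\mathbf{1}_n^T,0_n^T]^T$. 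Inspecting the second block-row of $A$ and using $\mathbf{1}_n^T\mathcal{L}_C=0$ shows the left null vector is $w_0=[0_n^T,\mathbf{1}_n^T,0_n^T]^T$. Since $w_0^Tv_0=-n\neq 0$ the zero eigenvalue is simple (consistent with the hypothesis), and $w_0^Tb=\mathbf{1}_n^T 0_n=0$, so $b\in\operatorname{range}(A)$ and equilibria exist.

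With this spectral picture I would split the state via the projection $P=v_0w_0^T/(w_0^Tv_0)$ onto $\ker A$ along $\operatorname{range}(A)$. Setting $z=Px$, $y=(I-P)x$ gives $\dot z=PAx+Pb=0$ (as $PA=0$ and $Pb=0$) and $\dot y=Ay+b$ on the invariant subspace $\operatorname{range}(A)$, where $A$ is Hurwitz by assumption. Hence $z(t)\equiv Px(0)$ and $y(t)\to-(A|_{\operatorname{range}(A)})^{-1}b$, so $x(t)$ converges to an equilibrium $x^\star$. The offset along $v_0$ is set by the initial condition but leaves both $V$ and $u$ invariant, since $v_0$ has zero $V$-component and adding $c\,v_0$ does not change $\hat V+\bar V$. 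Establishing this convergence is the main obstacle; the remainder is steady-state algebra.

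I would then read off the claims from $\dot x=0$. Premultiplying the third line of \eqref{eq:distributed_voltage_control_vector} by $\mathbf{1}_n^T$ and using $\mathbf{1}_n^T\mathcal{L}_C=0$ gives $\sum_i K_i^V(V_i-V_i^{\text{nom}})=0$, which is Claim 1. For Claim 2, $\dot{\hat V}=0$ forces $\hat V+\bar V-V\in\ker\mathcal{L}_C=\operatorname{span}(\mathbf{1}_n)$, so $u=K^P(\hat V+\bar V-V)=\eta K^P\mathbf{1}_n$ with $K^P=F^{-1}$; thus $f_iu_i\equiv\eta$ (equal marginal costs). Meanwhile $\dot V=0$ yields $\mathcal{L}_RV=I^{\text{inj}}+u$, i.e.\ the equality constraint of \eqref{eq:opt1}, and premultiplying by $\mathbf{1}_n^T$ gives $\sum_iu_i=-\sum_iI_i^{\text{inj}}$. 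Equal marginal costs together with feasibility are exactly the KKT conditions of the convex program \eqref{eq:opt1}, which are sufficient for optimality, so $u=u^\star$.

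Finally, for the voltage bound I would use $\mathcal{L}_RV=I^{\text{tot}}$ with $\mathbf{1}_n^TI^{\text{tot}}=0$, so the mean-free part of $V$ equals $\mathcal{L}_R^{\dagger}I^{\text{tot}}$ and voltage differences are unaffected by the free additive constant. Writing $\mathcal{L}_R^{\dagger}=\sum_{i=2}^n\lambda_i^{-1}v_iv_i^T$ and using that $V$ is mean-free, I would estimate $|V_i-V_j|\le 2\|V\|_\infty\le 2\|V\|_2\le 2\sum_{i=2}^n\lambda_i^{-1}|v_i^TI^{\text{tot}}|\le 2I^{\max}\sum_{i=2}^n\lambda_i^{-1}$ by applying the triangle inequality to the spectral sum. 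The only delicate point is tracking the constant $I^{\max}$ cleanly through the last norm estimate; the eigenvalue-sum structure itself falls straight out of the pseudoinverse of $\mathcal{L}_R$.
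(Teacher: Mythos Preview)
Your approach is essentially the paper's: identify the null space of $A$, argue convergence to an equilibrium, and then read off both claims from the steady-state equations via the same premultiplications and the KKT argument. Your convergence step (left null vector $w_0$, check $w_0^Tb=0$, projection decomposition) is in fact more careful than the paper's, which only asserts that $b$ is not parallel to the right eigenvector $v_1$; for the voltage-difference bound the paper simply cites an external result, whereas you sketch a direct pseudoinverse estimate whose constant-tracking you rightly flag as the one delicate point.
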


\begin{proof}
It is easily verified that the right-eigenvector of $A$ corresponding to the zero eigenvalue is $v_1={1}/{\sqrt{2n}}[1_{n}^T, -1_{n}^T, 0_{n}^T]^T$. Since $b$ as defined in \eqref{eq:cl_dynamics_vector}, is not parallel to $v_1$, $\lim_{t\rightarrow \infty} [{\bar{V}}(t), {\hat{V}}(t), {V}(t)]$ exists and is finite, by the assumption that all other eigenvalues lie in the open complex left half plane. Hence, we consider any stationary solution of \eqref{eq:cl_dynamics_vector}
\begin{align}
\begin{aligned}
\begin{bmatrix}
0_{n} \\ 0_{n} \\ 0_{n}
\end{bmatrix}
&=
{
\begin{bmatrix}
-\delta \mathcal{L}_C & 0_{n\times n} & -K^V \\
-\gamma \mathcal{L}_C & -\gamma \mathcal{L}_C & \gamma \mathcal{L}_C \\
CK^P & CK^P & -C(\mathcal{L}_R + K^P)
\end{bmatrix}
\begin{bmatrix}
{\bar{V}} \\ {\hat{V}} \\ {V}
\end{bmatrix}}
\\
&\;\;\;\;+ {\begin{bmatrix}
K^VV^{\text{nom}} \\ 0_{n} \\ CI^{\text{inj}}
\end{bmatrix}}.
\end{aligned}
\label{eq:cl_dynamics_vector_equilibrium}
\end{align}
Premultiplying \eqref{eq:cl_dynamics_vector_equilibrium} with $[1_{n}^T, 0_{n}^T, 0_{n}^T]$ yields
\begin{align*}
1_{n}^TK^V ( V^{\text{nom}}-V) = -\sum_{i=1}^n K_i^V V(t) + \sum_{i=1}^n K_i^V V_i^{\text{nom}}.
\end{align*}
The $n+1$:th to $2n$:th lines of \eqref{eq:cl_dynamics_vector_equilibrium} imply
\begin{align*}
\mathcal{L}_C(\bar{V}+\hat{V}-V) &= 0_{n} \Rightarrow \\
(\bar{V}+\hat{V}-V) &= k_1 1_{n} \Rightarrow \\
u = K^P (\bar{V}+\hat{V}-V) &=  k_1 K^P 1_{n}
\end{align*}
Now finally, premultiplying  \eqref{eq:cl_dynamics_vector_equilibrium} with $[0_{n}^T, 0_{n}^T, 1_{n}^TC^{-1}]$ yields
\begin{align*}
&1_{n}^T \left(K^P(\bar{V}+\hat{V}-V) + I^{\text{inj}} \right) = 1_{n}^T \left( k_1 K^P 1_{n} + I^{\text{inj}} \right) \\
&= k_1 \sum_{i=1}^n K^P_i + \sum_{i=1}^n I^{\text{inj}}_i=0_n,
\end{align*}
which implies $k_1= -\left( \sum_{i=1}^n I^{\text{inj}}_i \right) / \left( \sum_{i=1}^n K^P_i \right)$. The bound on $\lim_{t\rightarrow \infty} |V_i(t)-V_i(t)|$ follows from the proof of Theorem 3 in \cite{Andreasson2014_IFAC}. Since $K^P=F^{-1}$, any stationary solution of \eqref{eq:cl_dynamics_vector} satisfies $u=k_1F^{-1}1_{n}$. On the other hand, the KKT condition for the optimization problem \eqref{eq:opt1} is $Fu=\lambda 1_{n}$. Since \eqref{eq:opt1} is convex, the KKT condition is necessary and sufficient. This implies that any stationary solution of \eqref{eq:cl_dynamics_vector} solves \eqref{eq:opt1}. 
\end{proof}
While Theorem \ref{th:distributed_voltage_control} establishes an exact condition when the distributed controller \eqref{eq:distributed_voltage_control} stabilizes the MTDC system \eqref{eq:hvdc_scalar}, it does not give any insight in how to choose the controller parameters to achieve a stable closed loop system. The following theorem gives a sufficient stability condition for a special case.
\begin{theorem}
\label{th:stability_A}
The matrix $A$ as defined in \eqref{eq:cl_dynamics_vector}, always has one eigenvalue equal to $0$.
Assume that $\mathcal{L}_C = \mathcal{L}_R$, i.e. that the topology of the communication network is identical to the topology of the MTDC system. Assume furthermore that $K^P = k^P I_n$, i.e. the controller gains are equal. 
 Then the remaining eigenvalues lie in the open complex left half plane if
 \begin{align}
 &\frac{\gamma+\delta}{2k^P} \lambda_{\min}\left( \mathcal{L}_RC^{-1}  +C^{-1}\mathcal{L}_R \right) + 1 >0 \label{eq:stability_A_sufficient_1} \\
  &\frac{\gamma\delta}{2k^P} \lambda_{\min}\left( \mathcal{L}^2_RC^{-1} + C^{-1}\mathcal{L}_R^2 \right) + \min_i K^V_i >0 \label{eq:stability_A_sufficient_2} \\
  & \begin{aligned}
  &\lambda_{\max}\left(\mathcal{L}_R^3\right) \frac{\gamma\delta}{k^{P^2}} \\
  &\le \left( \frac{\gamma+\delta}{2k^P} \lambda_{\min}\left( \mathcal{L}_RC^{-1}  +C^{-1}\mathcal{L}_R \right) + 1 \right) \\
   &\left( \frac{\gamma\delta}{2k^P} \lambda_{\min}\left( \mathcal{L}^2_RC^{-1} + C^{-1}\mathcal{L}_R^2 \right) + \min_i K^V_i \right)
\end{aligned}   
    \label{eq:stability_A_sufficient_3}
 \end{align}
\end{theorem}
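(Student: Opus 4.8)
The plan is to analyze the eigenvalues of $A$ by exploiting the strong structural assumptions $\mathcal{L}_C = \mathcal{L}_R \triangleq \mathcal{L}$ and $K^P = k^P I_n$. Under these assumptions, I first want to reduce the $3n \times 3n$ eigenvalue problem to something tractable. The zero eigenvalue with eigenvector proportional to $[1_n^T, -1_n^T, 0_n^T]^T$ is already verified in Theorem~\ref{th:distributed_voltage_control}, so the goal is to locate the remaining $3n-1$ eigenvalues strictly in $\mathbb{C}^-$. Writing out the characteristic equation $\det(A - sI_{3n}) = 0$ directly would be unwieldy, so instead I would set up the eigenvalue equation $Av = sv$ blockwise: denoting $v = [\bar{v}^T, \hat{v}^T, \tilde{v}^T]^T$, the three block rows give
\begin{align*}
-\delta \mathcal{L}\bar{v} - K^V \tilde{v} &= s\bar{v}, \\
-\gamma \mathcal{L}(\hat{v} + \bar{v} - \tilde{v}) &= s\hat{v}, \\
Ck^P(\bar{v} + \hat{v}) - C(\mathcal{L} + k^P I_n)\tilde{v} &= s\tilde{v}.
\end{align*}
The key step is to eliminate variables to obtain a scalar (or low-dimensional) polynomial condition in $s$ parametrized by the spectrum of $\mathcal{L}$.

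The main simplification I expect to exploit is that all three blocks involve only $\mathcal{L}$, $C^{-1}$, $K^V$, and $k^P I_n$. If $C$ and $K^V$ were scalar multiples of the identity the matrices would all be simultaneously diagonalizable by the eigenbasis of $\mathcal{L}$, collapsing the problem into $n$ independent $3 \times 3$ blocks, one per eigenvalue $\lambda_i$ of $\mathcal{L}$; the characteristic polynomial would then factor as a product of cubics in $s$. Since $C$ and $K^V$ are general positive diagonal matrices, exact diagonalization fails, so the plan is to instead derive a single polynomial whose coefficients involve the symmetrized products such as $\mathcal{L}C^{-1} + C^{-1}\mathcal{L}$ and $\mathcal{L}^2 C^{-1} + C^{-1}\mathcal{L}^2$, which is precisely why these symmetrized expressions appear in conditions~\eqref{eq:stability_A_sufficient_1}--\eqref{eq:stability_A_sufficient_3}. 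I would eliminate $\hat{v}$ and $\bar{v}$ in favor of $\tilde{v}$, arriving at a quadratic-in-$s$ eigenvalue problem of the form $(s^2 M_2(s) + s M_1 + M_0)\tilde{v} = 0$, and then apply a Routh--Hurwitz-type or Lyapunov-type stability test to the resulting matrix pencil.

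The natural route to the three stated conditions is to recognize them as the entries of a $2 \times 2$ Routh--Hurwitz (or generalized Hermite) test on a reduced cubic characteristic equation in $s$. Conditions~\eqref{eq:stability_A_sufficient_1} and \eqref{eq:stability_A_sufficient_2} each assert positivity of one coefficient-like quantity, while \eqref{eq:stability_A_sufficient_3} is a product inequality of exactly the form $a_1 a_2 \ge a_0 a_3$ that guarantees no root crosses into the right half plane. I would therefore aim to show that for each mode the associated cubic $s^3 + b_2 s^2 + b_1 s + b_0 = 0$ has all roots in $\mathbb{C}^-$ whenever the three displayed inequalities hold, using the extremal eigenvalue bounds $\lambda_{\min}(\cdot)$ and $\lambda_{\max}(\mathcal{L}^3)$ to produce coefficient estimates that are uniform over all modes. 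The use of $\lambda_{\min}$ on the symmetrized matrices and $\max_i K^V_i$, rather than modewise values, suggests bounding the worst-case mode, so the argument is really a uniform Routh--Hurwitz bound.

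The hard part will be the elimination step that handles the non-commutativity of $C^{-1}$, $K^V$, and $\mathcal{L}$ cleanly, since these matrices are not simultaneously diagonalizable and a naive cubic-per-mode factorization is not literally available. I expect the crux to be justifying that a Lyapunov-type or congruence argument lets one replace products like $\mathcal{L}C^{-1}\mathcal{L}$ by their symmetrized, scalar-bounded counterparts without losing sufficiency; this is where the symmetrization $\mathcal{L}C^{-1} + C^{-1}\mathcal{L}$ enters and where the inequalities become sufficient rather than exact. Establishing that the congruence transformation preserves the sign structure of the spectrum, and that the extremal eigenvalue substitutions yield a genuinely sufficient (one-directional) condition, is the step I anticipate requiring the most care.
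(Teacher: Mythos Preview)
Your overall strategy matches the paper's: reduce the $3n\times 3n$ eigenvalue problem to a matrix cubic $Q(s)$, pass to a scalar cubic, and apply Routh--Hurwitz with coefficients bounded via extremal eigenvalues. The paper carries out the reduction by row-reducing $\det(sI_{3n}-A)$ directly rather than by blockwise elimination of the eigenvector components, but the two routes yield the same matrix polynomial $Q(s)$.

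The step you flag as hardest---handling the non-commutativity of $C^{-1}$, $K^V$, and $\mathcal{L}_R$---is resolved in the paper more simply than you anticipate: no Lyapunov equation or congruence transformation is needed. The paper just observes that $\det Q(s)=0$ forces $x^T Q(s)x=0$ for some unit vector $x$ (take $x$ in the null space of $Q(s)$), which gives a \emph{scalar} cubic $a_3 s^3+a_2 s^2+a_1 s+a_0=0$ whose coefficients $a_i$ are quadratic forms in $x$. The symmetrized combinations you noticed then appear automatically, since for real $x$ one has $x^T M x=\tfrac12 x^T(M+M^T)x$; in particular $x^T\mathcal{L}_R C^{-1}x=\tfrac12 x^T(\mathcal{L}_R C^{-1}+C^{-1}\mathcal{L}_R)x$. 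Conditions \eqref{eq:stability_A_sufficient_1}--\eqref{eq:stability_A_sufficient_2} are then uniform-in-$x$ lower bounds on $a_2$ and $a_1$, and \eqref{eq:stability_A_sufficient_3} combines an upper bound on $a_0 a_3$ with those lower bounds to force the Routh--Hurwitz inequality $a_0 a_3<a_1 a_2$. So the ``crux'' you identify dissolves into a one-line numerical-range reduction rather than a separate stability lemma.
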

\begin{remark}
By choosing $\gamma$ and $\delta$ sufficiently small, and choosing $k^P$ and $\min_iK^V_i$ sufficiently large, the inequalities \eqref{eq:stability_A_sufficient_1}--\eqref{eq:stability_A_sufficient_3} can always be satisfied. Intuitively, this implies that the consensus dynamics in the network should be sufficiently slow compared to the voltage dynamics. 
\end{remark}
\begin{proof}[Proof of Theorem \ref{th:stability_A}]
\begin{figure*}
\begin{align}
\begin{aligned}
0&=\det(sI_{3n}-A) = \left| \begin{matrix}
sI_n +\delta \mathcal{L}_C & 0_{n\times n} & K^V \\
\gamma \mathcal{L}_C & sI_n + \gamma \mathcal{L}_C & -\gamma \mathcal{L}_C \\
-CK^P & -CK^P & SI_n + C(\mathcal{L}_R{+}K^P) 
\end{matrix} \right| 
= \left| \begin{matrix}
sI_n +\delta \mathcal{L}_C & 0_{n\times n} & K^V \\
-sI_n & sI_n + \gamma \mathcal{L}_C & -\gamma \mathcal{L}_C \\
0_{n\times n} & -CK^P & SI_n + C(\mathcal{L}_R{+}K^P) 
\end{matrix} \right| \\
&= {s}^n \left| \begin{matrix}
sI_n +\delta \mathcal{L}_C & 0_{n\times n} & K^V \\
-I_n & I_n + \frac{\gamma}{s} \mathcal{L}_C & -\frac{\gamma}{s} \mathcal{L}_C \\
0_{n\times n} & -CK^P & SI_n + C(\mathcal{L}_R{+}K^P) 
\end{matrix} \right| \\
 &= {s}^n \left| sI + \delta\mathcal{L}_C \right|^{-1} \left| \begin{matrix}
sI_n +\delta \mathcal{L}_C & 0_{n\times n} & K^V \\
-sI - \delta\mathcal{L}_C & sI_n + \gamma\mathcal{L}_C + \delta \mathcal{L}_C + \frac{\gamma\delta}{s} \mathcal{L}_C^2 & -\gamma\mathcal{L}_C +\frac{\gamma\delta}{s}\mathcal{L}_C^2 \\
0_{n\times n} & -CK^P & SI_n + C(\mathcal{L}_R{+}K^P) 
\end{matrix} \right| \\
&= {s}^n \left| sI + \delta\mathcal{L}_C \right|^{-1} \left| \begin{matrix}
sI_n +\delta \mathcal{L}_C & 0_{n\times n} & K^V \\
0_{n\times n} & sI_n + \gamma\mathcal{L}_C + \delta \mathcal{L}_C + \frac{\gamma\delta}{s} \mathcal{L}_C^2 & -\gamma\mathcal{L}_C -\frac{\gamma\delta}{s}\mathcal{L}_C^2 +K^V \\
0_{n\times n} & -CK^P & SI_n + C(\mathcal{L}_R{+}K^P) 
\end{matrix} \right| \\
&= s^n 
\left| \begin{matrix}
 sI_n + \gamma\mathcal{L}_C + \delta \mathcal{L}_C + \frac{\gamma\delta}{s} \mathcal{L}_C^2 & -\gamma\mathcal{L}_C -\frac{\gamma\delta}{s}\mathcal{L}_C^2 +K^V \\
 -CK^P & SI_n + C(\mathcal{L}_R{+}K^P) 
\end{matrix} \right| \\
&= s^n \left| SI_n + C(\mathcal{L}_R{+}K^P \right| \left|CK^P \right|
\left| \begin{matrix}
\begin{matrix}
\left(sI_n + \gamma\mathcal{L}_C + \delta \mathcal{L}_C + \frac{\gamma\delta}{s} \mathcal{L}_C^2 \right)\cdot \\
K^{P^{-1}}C^{-1}\left( sI_n + C(\mathcal{L}_R{+}K^P) \right)
\end{matrix} & -\gamma\mathcal{L}_C -\frac{\gamma\delta}{s}\mathcal{L}_C^2 +K^V \\
 -SI_n - C(\mathcal{L}_R{+}K^P) & SI_n + C(\mathcal{L}_R{+}K^P) 
\end{matrix} \right| \\
&=  \left|CK^P \right| \left| \left[ \gamma\delta \mathcal{L}_C^2K^{P^{-1}} \mathcal{L}_R \right] + s \left[ (\delta+\gamma)\mathcal{L}_CK^{P^{-1}}\mathcal{L}_R + \delta\mathcal{L}_C +\gamma\delta\mathcal{L}_C^2K^{P^{-1}}C^{-1} +K^V \right] \right. \\
& \;\;\;\; \left.  + s^2\left[ K^{P^{-1}}\mathcal{L}_R + I_n +(\gamma+\delta)\mathcal{L}_CK^{P^{-1}}C^{-1} \right] + s^3\left[ K^{P^{-1}}C^{-1} \right] \right| \triangleq \left|CK^P \right| \det\left(Q(s)\right)
\end{aligned}
\label{eq:char_eq_A}
\end{align}
\noindent\makebox[\linewidth]{\rule{\textwidth}{0.4pt}}
\end{figure*}
The characteristic equation of $A$ is given by equation \eqref{eq:char_eq_A}. Clearly, this equation has a solution only if $x^TQ(s)x=0$ has a solution for some $x: \norm{x}=1$. Substituting $K^P=k^PI_n$ and $\mathcal{L}_C=\mathcal{L}_R$, this equation becomes
\begin{align}
\begin{aligned}
0&= x^TQ(s)x =  \underbrace{ \frac{\gamma\delta}{k^P} x^T \mathcal{L}_R^3 x}_{a_0} \\
&+ s \underbrace{x^T\left[ \frac{\delta+\gamma}{k^P}\mathcal{L}_R^2 
+ \delta\mathcal{L}_R +\frac{\gamma\delta}{k^P}\mathcal{L}_R^2C^{-1} +K^V \right]x}_{a_1}  \\
&+   s^2 \underbrace{x^T\left[\frac{1}{k^P}\mathcal{L}_R + I_n +\frac{\gamma+\delta}{k^P}\mathcal{L}_RC^{-1} \right]x}_{a_2} \\
&+ s^3 \underbrace{\frac{1}{k^P} x^T C^{-1} x}_{a_3}.
\end{aligned}
\label{eq:x^TQx}
\end{align}
Clearly \eqref{eq:x^TQx} has one solution $s=0$ for $x=\frac{a}{\sqrt{n}}[1, \dots, 1]^T$, since this implies that $a_0=0$. The remaining solutions are stable if and only if the polynomial $a_1+ sa_2+s^2a_3=0$ is Hurwitz, which is equivalent to $a_i>0$ for $i=1,2,3$ by the Routh-Hurwitz stability criterion. For $x\ne \frac{a}{\sqrt{n}}[1, \dots, 1]^T$, we have that $a_0>0$, and thus $s=0$ cannot be a solution of \eqref{eq:x^TQx}. By the Routh-Hurwitz stability criterion, \eqref{eq:x^TQx} has only stable solutions if and only if $a_i>0$ for $i=0,1,2,3$ and $a_0a_3<a_1a_2$. Since this condition implies that $a_i>0$ for $i=1,2,3$, there is no need to check this second condition explicitly. Clearly $a_3>0$ since $K^{P^{-1}}$ and $C^{-1}$ are diagonal with positive elements. It is easily verified that $a_2>0$ if \eqref{eq:stability_A_sufficient_1} holds, since $\mathcal{L}_R\ge 0$. Similarly, $a_1>0$ if \eqref{eq:stability_A_sufficient_2} holds, since also $\mathcal{L}_R^2\ge0$ and $x^TK^Vx\ge \min_i K^V_i$. In order to assure that $a_0a_3<a_1a_2$, we need furthermore to upper bound $a_0a_3$. The following bound is easily verified
\begin{align*}
a_0a_3<\lambda_{\max}\left(\mathcal{L}_R^3\right) \frac{\gamma\delta}{k^{P^2}} \max_i C_i.
\end{align*} 
The previously obtained lower bounds on $a_1$ and $a_2$ give a lower bound $a_1a_2$. Thus \eqref{eq:stability_A_sufficient_3} is a sufficient condition for when $a_0a_3<a_1a_2$. 
\end{proof}

\section{Simulations}
\label{sec:simulations}
Simulations of an MTDC system were conducted using MATLAB. The MTDC  was modelled by \eqref{eq:hvdc_scalar}, with $u_i$ given by the distributed controller \eqref{eq:distributed_voltage_control}. The topology of the MTDC system is given by Figure \ref{fig:graph}. The capacities are assumed to be $C_i=123.79 \; \mu \text{F}$ for $i=1,2,3,4$, while the resistances are assumed to be $R_{12}=\Omega$, $R_{13}= \Omega$, $R_{24} \Omega$, $R_{34} 0.0065 \; \Omega$. The controller parameters were set to $K^P_i= 1 \; \Omega^{-1}$ for $i=1,2,3,4$, $\gamma = 0.005$ and $c_{ij} = R_{ij}^{-1} \; \Omega^{-1}$ for all $(i,j)\in \mathcal{E}$. Due to the long geographical distances between the DC buses, communication between neighboring nodes is assumed to be delayed with delay $\tau$. While the nominal system without time-delays is verified to be stable according to Theorem \ref{th:distributed_voltage_control},  time-delays might destabilize the system. It is thus of importance to study the effects of time-delays further. 
 The dynamics of the controller \eqref{eq:distributed_voltage_control} with time delays thus become

\begin{align}
\label{eq:distributed_voltage_control_delay}
\begin{aligned}
u_i &= K^P(\hat{V}_i(t) -V_i(t)) \\
\dot{\hat{V}}_i &= K^V_i(V^{\text{nom}}-V_i(t)) \\
- &\gamma \sum_{j\in \mathcal{N}_i} c_{ij} \left( (\hat{V}_i(t') -V_i(t')){-}(\hat{V}_j(t'){-}V_j(t')) \right) \\
\dot{\bar{V}}_i &= - K_i^V (V_i(t) - V_i^{\text{nom}}) -\delta \sum_{j\in \mathcal{N}_i} c_{ij} (\bar{V}_i(t') - \bar{V}_j(t')),
\end{aligned}
\end{align}
where $t'=t-\tau$. The injected currents are assumed to be initially given by $I^{\text{inj}}=[300,200,-100,-400]^T$ A, and the system is allowed to converge to the stationary solution. Since the injected currents satisfy $I_i^{\text{inj}}=0$, $u_i=0$ for $i=1,2,3,4$ by Theorem \ref{th:distributed_voltage_control}.
Then, at time $t=0$, the injected currents are changed due to changed power loads. The new injected currents are given by $I^{\text{inj}}=[  300, 200, -300, -400]^T$ A. The step response of the voltages $V_i$ and the controlled injected currents $u_i$ are shown in Figure \ref{fig:powersystems_sim_1}. The conservative voltage bounds guaranteed by Theorem, are indicated by $V_\text{min}$ and $V_\text{max}$. For the delay-free case, i.e., $\tau=0$ s, the voltages $V_i$ are restored close to their new stationary values within $2$ seconds. The controlled injected currents $u_i$ converge to their stationary values within $8$ seconds. The simulation with time delays $\tau=0.4$ s, show that the controller is robust to moderate time-delays. For a time delay of $\tau=0.5$ s, the system becomes unstable.

\setlength\fheight{3.5cm}
\setlength\fwidth{6.8cm}
\begin{figure*}[ht]
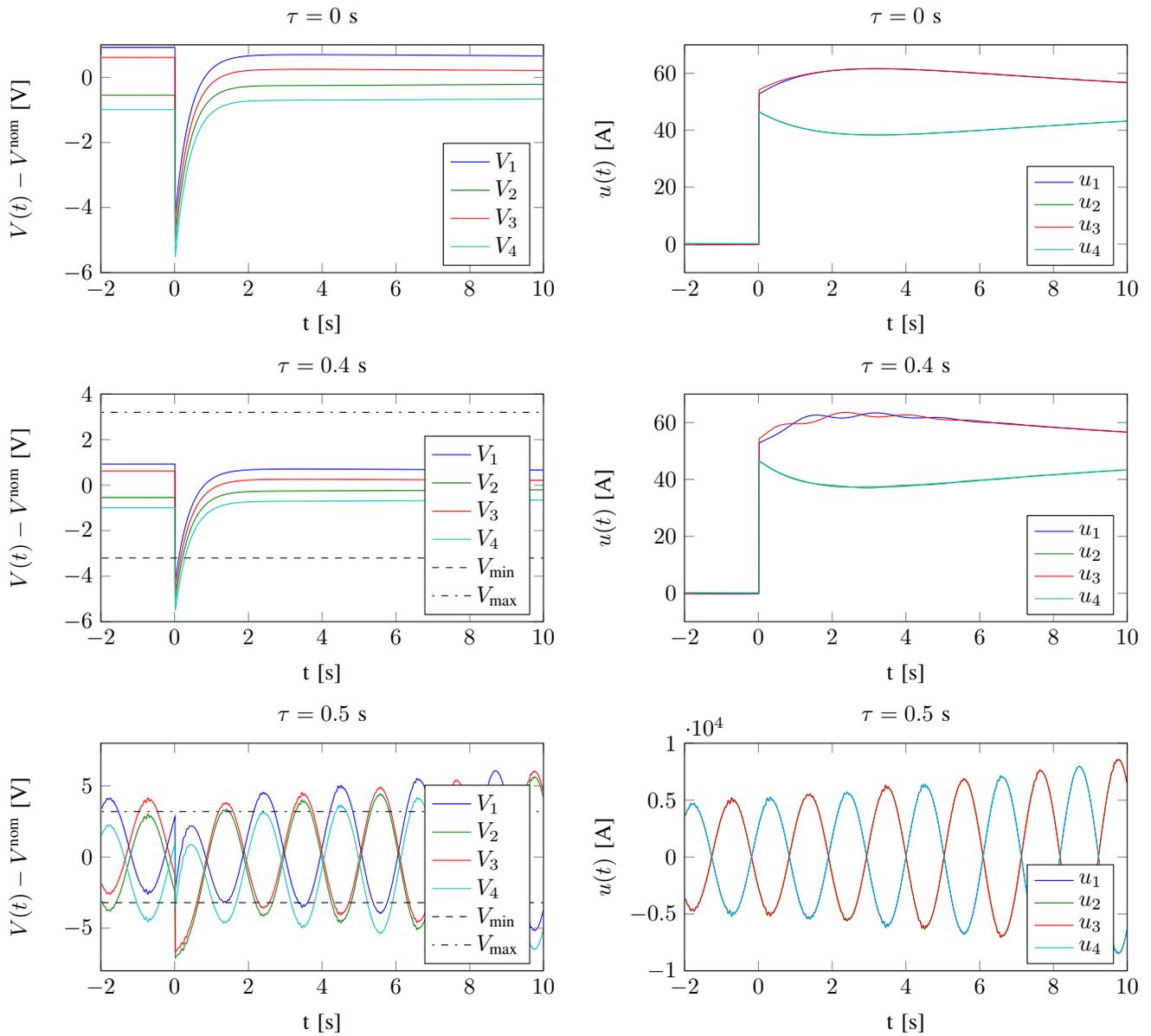

	\centering
$
\begin{array}{cc}
	\input{Simulations/V_tau=0gamma=0005.tikz} & \input{Simulations/u_tau=0gamma=0005.tikz} \\
	\input{Simulations/V_tau=02gamma=0005.tikz} & \input{Simulations/u_tau=02gamma=0005.tikz} \\
	\input{Simulations/V_tau=03gamma=0005.tikz} & \input{Simulations/u_tau=03gamma=0005.tikz}
\end{array}
$
\caption{The figure shows the voltages $V_i$ and the controlled injected currents $u_i$ of the DC buses for different time-delays $\tau$ on the communication links. The system model is given by \eqref{eq:hvdc_scalar}, and $u_i$ is given by the distributed controller \eqref{eq:distributed_voltage_control_delay}.}
\label{fig:powersystems_sim_1}
\end{figure*}

\section{Discussion and Conclusions}
\label{sec:discussion}
In this paper we have proposed a fully distributed controller for voltage and current control in MTDC networks. We show that under certain conditions, there exist controller parameters such that the closed-loop system is stabilized. We have shown that the proposed controller is able to maintain the voltage levels of the DC buses close to the nominal voltages, while at the same time, the global cost of the injected currents is asymptotically minimized. 

This paper lays the foundation for distributed control strategies for systems of interconnected AC and MTDC systems. Future work will in addition to the voltage dynamics of the MTDC system, also consider the dynamics of the connected AC systems. Interconnecting multiple asynchronous AC systems also enables novel control applications, for example automatic sharing of primary and secondary frequency control reserves. 

\bibliography{references}
\bibliographystyle{plain}
\end{document}